        \newtheorem{lemma}{Lemma}[section]
        \newtheorem{theorem}[lemma]{Theorem}
        \newtheorem{definition}{Definition}[section]
\numberwithin{equation}{section}
\title{\bf{EIT in a layered anisotropic medium}}
\author{Giovanni Alessandrini\thanks{Dipartimento di Matematica e Geoscienze, Universit\`{a} di Trieste, Italy. Email:alessang@units.it}\qquad{Maarten V. de Hoop\thanks{Departments of Computational and Applied Mathematics, Earth Science, Rice University, Houston, Texas, USA. Email:mdehoop@rice.edu}}\qquad\\
Romina Gaburro\thanks{Department of Mathematics and Statistics, Health Research Institute (HRI), University of Limerick, Ireland.  Email: romina.gaburro@ul.ie}\qquad Eva Sincich\thanks{Dipartimento di Matematica e Geoscienze, Universit\`{a} di Trieste, Italy. Email:esincich@units.it}}
\date{}
\begin{document}
\maketitle

\begin{abstract}
We consider the inverse problem in geophysics of imaging the
subsurface of the Earth in cases where a region below the surface is
known to be formed by strata of different materials and the depths and
thicknesses of the strata and the (possibly anisotropic) conductivity
of each of them need to be identified simultaneously. This problem is
treated as a special case of the inverse problem of determining a
family of nested inclusions in a medium $\Omega\subset\mathbb{R}^n$,
$n \geq 3$.
\end{abstract}

\section{Introduction}\label{sec1}
\setcounter{equation}{0}

We consider the inverse Calder\'on problem \cite{C}, also known as
Electrical Impedance Tomography (EIT) or, in geophysics, as Direct
Current (DC) method, of determining a matrix-valued conductivity
$\sigma(x)$ of a body $\Omega$ in which electrostatic equilibrium is
modelled by the elliptic equation
\begin{equation}\label{conductivity equation}
   \mbox{div}(\sigma\nabla u)=0\qquad\textnormal{in}\quad\Omega,
\end{equation}
where $u$ represents the electrostatic potential and the available
measurements are all possible pairs of current fluxes $\sigma\nabla
u\cdot\nu|_{\Sigma}$ and boundary voltages $u|_{\Sigma}$ collected on
a given open portion $\Sigma$ of $\partial\Omega$.

When $\sigma$ is isotropic, that is $\sigma =\gamma I$, $I$ denotes
the identity matrix and $\gamma$ is a scalar function on $\Omega$, a
vast literature is available and the theory has achieved a substantial
level of completeness, see, for instance \cite{U}. With the celebrated
counterexample by Tartar \cite{Ko-V1}, the general anisotropic problem
still poses several open issues. A principal line of investigation
concerning anisotropy in EIT has been of proving uniqueness modulo a
change of variables which fixes the boundary \cite{Le-U, Sy, N, La-U,
  La-U-T, Be, As-La-P}. In most applications, however, knowledge of
position and, hence, coordinates are important. In this direction,
certain, diverse results are available \cite{Ko-V1, Al, Al-G, Al-G1,
  G-Li, G-S, Li, I}. In \cite{Al-dH-G} a uniqueness result was
obtained when the unknown anisotropic conductivity is assumed to be
piecewise constant on a given domain partition, or segmentation, with
non-flat interfaces. Non-flatness shall be rigorously defined in 
Section \ref{Notation and definitions} where our other definitions are
given as well.  We recall that in \cite{Al-dH-G} we also specialized
Tartar's counterexample to the case of a half space and a constant
conductivity thus demonstrating that the non-flatness condition on
boundary and interfaces is necessary.

Here we address the more general problem when also the interfaces
defining the domain partition are unknown.  In this respect, in the
context of elastostatics, C\^arstea, Honda and Nakamura \cite{CHN}
obtained uniqueness from a local boundary map of a piecewise constant
anisotropic elasticity tensor where the partitioning is allowed to be
unknown, provided it is formed by subanalytic sets
\cite{BM}. Here, for EIT, we significantly relax the regularity
requirements on the domain partition on the one hand, but impose
stricter conditions on the configuration on the other hand.

More precisely, we treat
the case in which the interfaces are the (non-flat) $C^{1,\alpha}$ boundaries of a
nested family of subdomains $\Omega_k$, $\Omega_{k+1} \subset\subset
\Omega_k \subset\subset \Omega$, $k=1,\dots, K$. Within this setting,
assuming that the unknown conductivity $\sigma$ has the structure
\begin{equation}\label{sigma structure}
   \sigma = \sum_{k=1}^{K+1} \sigma_{k}
      \chi_{\left(\Omega_{k-1}\setminus\overline{\Omega}_k\right)},
\end{equation}
{where we understand that $\Omega_0 = \Omega$,
  $\Omega_{K+1} = \emptyset$ (so that the innermost layer consists of
  all of $\Omega_{K}$) and} $\sigma_k$ are positive definite constant
matrices satisfying the jump or visibility conditions
\begin{equation}\label{jump condition}
   \sigma_k \neq \sigma_{k+1},\quad
         \textnormal{for\:all}\quad k=1,\dots , K,
\end{equation}
we prove (Theorem~\ref{teorema principale} below) that $\sigma$ is
uniquely determined from the knowledge of the local
Neumann-to-Dirichlet (N-D) map
\begin{equation}\label{N-D local}
   \mathcal{N}^{\Sigma}_{\sigma} : \sigma\nabla u \cdot \nu|_{\Sigma}
                \longrightarrow u|_{\Sigma},
\end{equation}
for all solutions $u \in H^{1}(\Omega)$ to \eqref{conductivity
  equation}. Here $\Sigma$ is an open (non-flat) portion of
$\partial\Omega$.

From a geological perspective, the mentioned stratification arises
naturally in sedimentary basins, containing hydrocarbon reservoirs.
{Through the electrical conductivity, indeed, a geological image can be
obtained from boundary data. This is because the electrical
conductivity of Earth's materials varies over many orders of magnitude
while it depends upon many factors, including rock type, porosity,
connectivity of pores or permeability, nature of fluid, and metallic
content of a solid matrix. The representation of conductivity used in
this paper was motivated by the work of Loke, Acworth and Dahlin
\cite{Lo} and Farquharson \cite{F}. In the DC
inverse problem, the location of the boundaries or interfaces and the
conductivities are unknown, though the occurrence of stratification
might be inferred from independent or joint imaging of seismic data
\cite{H-O, Ga}. In reality, the
stratification will have a finite extent, that is, appear in some
cylindrical cut of the domain's interior or subsurface. We adapt our
analysis to this case, in a variation of Theorem \ref{teorema
  principale} in which we assume that $\sigma$ satisfies the layered
structure assumption only on a subdomain $C$ of which the ``top''
boundary, contained in $\partial\Omega$, is an appropriate
neighborhood of $\Sigma$ where measurements are collected. We then
uniquely determine $\sigma$ in $C$ only. In the absence of a
uniqueness result, this inverse problem has been extensively studied
in geophysics, mostly through experimenting with optimization and
sometimes motivated by a statistics framework \cite{M, E, Z, Gu}. 
Most recently, a regularization emphasizing sharp boundaries has been investigated by
Par\'{e} and Li \cite{P}; this strategy is closely aligned with
our analysis.}


\section{Main Result}\label{sec2}
\setcounter{equation}{0}


\subsection{Notation and definition}\label{Notation and definitions}

In several places in this manuscript it will be useful to single out one coordinate
direction. To this purpose, the following notations for
points $x\in \mathbb{R}^n$ will be adopted. For $n\geq 3$,
a point $x\in \mathbb{R}^n$ will be denoted by
$x=(x',x_n)$, where $x'\in\mathbb{R}^{n-1}$ and $x_n\in\mathbb{R}$.
Moreover, given a point $x\in \mathbb{R}^n$,
we shall denote with $B_r(x), B_r'(x)$ the open balls in
$\mathbb{R}^{n}, \mathbb{R}^{n-1}$ respectively centred at $x$ with radius $r$
and by $Q_r(x)$ the cylinder $B_r'(x')\times(x_n-r,x_n+r)$. We shall denote
$\mathbb{R}^n_+=\{(x',x_n)\in \mathbb{R}^n| x_n>0 \}$, $B^+_r=B_r\cap\mathbb{R}^n_+$, where we understand $B_r=B_r(0)$ and $Q_r=Q_r(0)$.


We shall assume throughout that $\Omega$ is a bounded domain with Lipschitz boundary, see e.g. \cite[4.9]{A-F}.



\begin{definition}\label{def C1 alpha boundary}
Let $\Omega$ be a domain in $\mathbb R^n$. Given $\alpha$,
$\alpha\in(0,1)$, we say that a portion $\Sigma$ of
$\partial\Omega$ is of class $C^{1,\alpha}$  if for any $P\in\Sigma$ there exists a rigid transformation of
$\mathbb R^n$ under which we have $P=0$ and
$$\Omega\cap Q_{r_0}=\{x\in Q_{r_0}\,|\,x_n>\varphi(x')\},$$
where $\varphi$ is a $C^{1,\alpha}$ function on $B'_{r_0}$
satisfying
\[\varphi(0)=|\nabla_{x'}\varphi(0)|=0.\]




\end{definition}

\begin{definition}\label{def C1 alpha non flat}
Given $\Sigma$ as above, we shall say that such a portion of a surface is non-flat (and equivalently the function $\varphi$) at a point $P\in\Sigma$ if, considering the  reference system and the function $\varphi$ as above, we have that $\varphi$ is not identically zero in any open neighborhood of $P=0$.
\end{definition}

\begin{definition}\label{Omega non flat}
We shall say that a whole boundary $\partial\Omega$ is non-flat if for each $P\in\partial\Omega$ there exists an open portion $\Sigma$ of $\partial\Omega$
such that $P\in\Sigma$, $\Sigma$ is of class $C^{1,\alpha}$ and it is non-flat at $P$.
\end{definition}

\subsubsection*{The Neumann-to-Dirichlet map.}\label{N-to-D}

We denote by $Sym_n$ the class of
$n\times n$ symmetric real valued matrices.  Let $\Omega$ be a domain
in $\mathbb{R}^n$ with Lipschitz boundary $\partial\Omega$ and assume
that $\sigma\in L^{\infty}(\Omega\:,Sym_{n})$ satisfies the
ellipticity condition

\begin{eqnarray}\label{ellitticita'sigma}
\lambda^{-1}\vert\xi\vert^{2}\leq{\sigma}(x)\xi\cdot\xi\leq\lambda\vert\xi\vert^{2},
& &for\:almost\:every\:x\in\Omega,\nonumber\\
& &for\:every\:\xi\in\mathbb{R}^{n}.
\end{eqnarray}

We shall also denote by $\langle\cdot,\cdot\rangle$ the
$L^{2}(\partial\Omega)$-pairing between
$H^{\frac{1}{2}}(\partial\Omega)$ and its dual
$H^{-\frac{1}{2}}(\partial\Omega)$.

We consider the following function spaces

\begin{equation*}
_{0}H^{\frac{1}{2}}(\partial \Omega)=\left\{f\in
H^{\frac{1}{2}}(\partial \Omega)\vert\:\int_{\partial\Omega}f\:
=0\right\},
\end{equation*}
\begin{equation*}
_{0}H^{-\frac{1}{2}}(\partial \Omega)=\left\{\psi\in
H^{-\frac{1}{2}}(\partial \Omega)\vert\:\langle\psi,\:1\rangle=0
\right\}.
\end{equation*}

We define the global Neumann-to-Dirichlet map as follows.

\begin{definition}\label{definition N-D}
The Neumann-to-Dirichlet (N-D) map associated with $\sigma$,

\[\mathcal{N}_{\sigma}:\  _{0}H^{-\frac{1}{2}}(\partial \Omega)\longrightarrow \:_{0}H^{\frac{1}{2}}(\partial \Omega)\]

is given by the selfadjoint operator satisfying

\begin{equation}\label{ND globale}
\langle\psi,\:\mathcal{N}_{\sigma}\psi\rangle\:=\:\int_{\:\Omega} \sigma(x)
\nabla{u}(x)\cdot\nabla{u}(x)\:dx,
\end{equation}
for every $\psi\in\: _{0}H^{-\frac{1}{2}}(\partial \Omega)$, where
$u\in{H}^{1}(\Omega)$ is the weak solution to the Neumann problem

\begin{equation}\label{N bvp}
\left\{ \begin{array}{lll}\displaystyle\textnormal{div}(\sigma\nabla u)=0, &
\textrm{$\textnormal{in}\quad\Omega$},\\
\displaystyle\sigma\nabla u\cdot\nu\vert_{\partial\Omega}=\psi, &
\textrm{$\textnormal{on}\quad{\partial\Omega}$},\\
\displaystyle\int_{\partial\Omega}u\: =0.
\end{array} \right.
\end{equation}

\end{definition}
{Note that from \eqref{ND globale} the bilinear form
\begin{equation*}\label{ND b}
\langle\varphi,\:\mathcal{N}_{\sigma}\psi\rangle, \:  \varphi, \psi \in\: _{0}H^{-\frac{1}{2}}(\partial \Omega) ,
\end{equation*}
can be defined by polarization in a 	straightforward fashion.}
Given $\sigma^{(i)}\in L^{\infty}(\Omega\:,Sym_{n})$, satisfying \eqref{ellitticita'sigma}, for $i=1,2$, the following identity can be recovered from Alessandrini's identity (see \cite[(b), p. 253]{Al}) {and the (obvious) equality
\[\mathcal{N}_{\sigma^{(1)}}^{-1}-\mathcal{N}_{\sigma^{(2)}}^{-1}=\mathcal{N}_{\sigma^{(1)}}^{-1}\left(\mathcal{N}_{\sigma^{(2)}} - \mathcal{N}_{\sigma^{(1)}}\right)\mathcal{N}_{\sigma^{(2)}}^{-1} ,
\]}
that is,
\begin{equation}\label{Alessandrini identity N-D}
\langle\sigma^{(1)}\nabla u_1\cdot\nu,\left(\mathcal{N}_{\sigma^{(2)}} - \mathcal{N}_{\sigma^{(1)}}\right)\sigma^{(2)}\nabla u_2\cdot\nu\rangle = \int_{\Omega} \left(\sigma^{(1)}(x) - \sigma^{(2)}(x)\right)\nabla u_1(x)\cdot\nabla u_2(x),
\end{equation}

for any $u_i\in H^{1}(\Omega)$ being the weak solution to

\begin{equation}\label{conductivity equations}
\textnormal{div}(\sigma^{(i)}(x)\nabla u_i(x))=0,\qquad\textnormal{in}\quad\Omega,
\end{equation}

for $i=1,2$.\\

Now we introduce the local version of the N-D map. Let  $\Sigma$ be an open portion of $\partial\Omega$ and let
$\Delta=\partial\Omega\setminus\overline\Sigma$. We introduce the subspace of $H^{\frac{1}{2}}(\partial \Omega)$,

\[H^{\frac{1}{2}}_{co}(\Delta)=\left\{f\in H^{\frac{1}{2}}(\partial \Omega)\:|\: \mbox{supp}(f)\subset\Delta\right\}.\]

We denote by $H^{\frac{1}{2}}_{00}(\Delta)$  the closure in $H^{\frac{1}{2}}(\partial\Omega)$ of the space
$H^{\frac{1}{2}}_{co}(\Delta)$ and we introduce

\begin{equation}
_{0}H^{-\frac{1}{2}}(\Sigma)=\left\{\psi\in \:
_{0}H^{-\frac{1}{2}}(\partial\Omega)\vert\:\langle\psi,\:f\rangle=0,\quad\textnormal{for\:any}\:f\in
H^{\frac{1}{2}}_{00}(\Delta)\right\},
\end{equation}
that is, the space of distributions $\psi \in
H^{-\frac{1}{2}}(\partial\Omega)$ which are supported in
$\overline\Sigma$ and have zero average on $\partial\Omega$. The local
N-D map is then defined as follows.

\begin{definition}
The local Neumann-to-Dirichlet map associated with $\sigma$,
$\Sigma$ is the operator $\mathcal{N}_{\sigma}^{\Sigma}:\:
_{0}H^{-\frac{1}{2}}(\Sigma)\longrightarrow
\big(_{0}H^{-\frac{1}{2}}(\Sigma)\big)^{\ast}\subset{_{0}H}^{\frac{1}{2}}(\partial\Omega)$
given  by
\begin{equation}
{\langle \varphi,\;\mathcal{N}_{\sigma}^{\Sigma}\psi\rangle=
\langle \varphi,\;\mathcal{N}_{\sigma}\psi\rangle,}
\end{equation}

for every $\varphi, \psi\in\:_{0}H^{-\frac{1}{2}}(\Sigma)$.
\end{definition}

Given $\sigma^{(i)}\in L^{\infty}(\Omega\:,Sym_{n})$, satisfying \eqref{ellitticita'sigma}, for $i=1,2$, we also recover from \eqref{Alessandrini identity N-D}

\begin{equation}\label{Alessandrini identity local N-D}
\left<\psi_1,\left(\mathcal{N}_{\sigma^{(2)}}^{\Sigma} - \mathcal{N}_{\sigma^{(1)}}^{\Sigma}\right)\psi_2\right> = \int_{\Omega} \left(\sigma^{(1)}(x) - \sigma^{(2)}(x)\right)\nabla u_1(x)\cdot\nabla u_2(x),
\end{equation}

for any $\psi_i\in\: _{0}H^{-\frac{1}{2}}(\Sigma)$, for $i=1,2$ and $u_i\in H^{1}(\Omega)$ being the unique weak solution to the Neumann problem

\begin{equation}
\left\{ \begin{array}{lll}\displaystyle\textnormal{div}(\sigma^{(i)}\nabla u_i)=0, &
\textrm{$\textnormal{in}\quad\Omega$},\\
\displaystyle\sigma^{(i)}\nabla u_i\cdot\nu\vert_{\partial\Omega}=\psi_i, &
\textrm{$\textnormal{on}\quad{\partial\Omega}$},\\
\displaystyle\int_{\partial\Omega}u_i\: =0.
\end{array} \right.
\end{equation}



\subsection{The a-priori assumptions}\label{subsection assumptions}


The assumptions pertaining to the domain partition are
\begin{enumerate}

\item $\Omega\subset\mathbb{R}^n$ is a bounded domain, with $n\geq 3$.

\item $\partial\Omega$ is of Lipschitz class.

\item We fix {a connected} open non-empty subset $\Sigma$ of $\partial\Omega$ (where the measurements in terms of the local N-D map are taken) and assume there exists $\alpha$, $\alpha\in(0,1)$ such that $\Sigma$ is $C^{1,\alpha}$ and non-flat.

More specifically we assume that there exists $P_0\in\Sigma$ and a rigid transformation of coordinates under which we have $P_0=0$ and

\begin{eqnarray}
\Sigma\cap{Q}_{r_{0}/3} &=&\{x\in
Q_{r_0/3}|x_n=\varphi_0(x')\}\nonumber\\
\left(\mathbb{R}^n\setminus\Omega\right)\cap {Q}_{r_{0}/3} &=&\{x\in
Q_{r_0/3}|x_n<\varphi_0(x')\}\nonumber\\
\Omega\cap {Q}_{r_{0}/3} &=&\{x\in
Q_{r_0/3}|x_n>\varphi_0(x')\},
\end{eqnarray}

where $\varphi_0$ is a non-flat $C^{1,\alpha}$ function on $B'_{r_o/3}$ satisfying

\[\varphi_0(0)=|\nabla\varphi_0(0)|=0.\]

\item Let {$K$ be a positive integer and let} $\Omega_0,\:\Omega_1,\dots , \Omega_K$ be nested domains

\[\Omega_K\subset\subset\Omega_{K-1}\subset\subset\dots\Omega_0 =\Omega.\]

For $k=1,\dots , K$ we denote

\begin{equation}\label{definition Dk}
D_k =\Omega_{k-1}\setminus\overline{\Omega}_k,
\end{equation}

where for $k=K+1$ we set

\[D_{K+1}=\Omega_K\]

{and we assume that all $D_k$ are connected}.

\item We assume that $\partial\Omega_k$ is $C^{1,\alpha}$ and it is non-flat according to Definition \ref{Omega non flat}, for every $k=1,\dots , K$.





\end{enumerate}

We then assume that the conductivity $\sigma\in
L^{\infty}(\Omega,\:Sym_n)$, satisfies {the uniform
  ellipticity condition} \eqref{ellitticita'sigma} and is of type
\begin{equation}\label{a priori info su sigma}
\sigma(x)=\sum_{k=1}^{K+1}\sigma_{k}\chi_{D_k}(x),\qquad
x\in\Omega,
\end{equation}
where the $\sigma_{k}$ are positive definite constant matrices, for $k=1,\dots ,K+1$ .


\subsection{Global uniqueness}

Our main result is stated below.

\begin{theorem}\label{teorema principale}
Let $K_i\in\mathbb{N}\setminus\{0\}$ and $\Omega$, $\Sigma$, $\Omega^{(i)}_k$, $k=0,\dots , K_i$,  $D^{(i)}_k$, $k=1,\dots , K_i+1$, for $i=1,2$ satisfy assumptions $1.-5.$ of subsection \ref{subsection assumptions}. If $\sigma^{(i)}$, $i=1,2$ are two conductivities of type

\begin{equation}\label{conduttivita anisotrope}
\sigma^{(i)}(x)=\sum_{k=1}^{K_i+1}\sigma_{k}^{(i)}\chi_{D^{(i)}_k}(x),\qquad
x\in\Omega,\:i=1,2,
\end{equation}

where $\sigma_{k}^{(i)}\in Sym_n$ are positive definite constant matrices satisfying

\begin{equation}\label{different matrices}
\sigma_{k}^{(i)}\neq\sigma_{k+1}^{(i)}\qquad k=1,\dots , K_i
\end{equation}

and the uniform ellipticity condition \eqref{ellitticita'sigma}, for $k=1,\dots , K_i+1$ and

\[\mathcal{N}^{\Sigma}_{\sigma^{(1)}}=\mathcal{N}^{\Sigma}_{\sigma^{(2)}},\]

then

\begin{equation}\label{equality number of domains}
K_1 = K_2:=K,
\end{equation}

\begin{equation}\label{uniqueness domains and conductivities}
\Omega^{(1)}_k = \Omega^{(2)}_k \quad\mbox{and}\quad\sigma^{(1)}_{k+1}=\sigma^{(2)}_{k+1},\qquad\textnormal{for\:any}\quad k=0,\dots , K.
\end{equation}

\end{theorem}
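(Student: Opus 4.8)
The plan is to combine the integral identity \eqref{Alessandrini identity local N-D} with a layer-stripping induction that proceeds from $\Sigma$ inward, using unique continuation to propagate equality of solutions across each homogeneous layer and the non-flatness hypotheses to remove the anisotropic change-of-variables ambiguity at the boundary and at each interface. First I would settle the outermost step. Since $\mathcal N^{\Sigma}_{\sigma^{(1)}}=\mathcal N^{\Sigma}_{\sigma^{(2)}}$, identity \eqref{Alessandrini identity local N-D} yields $\int_{\Omega}(\sigma^{(1)}-\sigma^{(2)})\nabla u_1\cdot\nabla u_2\,dx=0$ for every admissible pair of solutions. Near the distinguished point $P_0\in\Sigma$ both conductivities equal their constant outer values $\sigma^{(i)}_1$, so I would localize this integral there, constructing a family of solutions whose gradients at $P_0$ span $\mathbb R^n$ (for instance from the explicit constant-coefficient solutions, or from singular solutions with poles placed just outside $\Sigma$). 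The vanishing of the quadratic form $(\sigma^{(1)}_1-\sigma^{(2)}_1)\xi\cdot\eta$ on a spanning set then forces $\sigma^{(1)}_1=\sigma^{(2)}_1$. This is exactly where non-flatness of $\Sigma$ at $P_0$ is indispensable: on a flat portion Tartar's counterexample leaves the matrix undetermined up to boundary-fixing linear maps, whereas a genuinely curved interface forces any such map to be rigid. Since $D^{(i)}_1$ is connected with $\Sigma$ in its boundary, this determines a common value $\sigma_1:=\sigma^{(1)}_1=\sigma^{(2)}_1$ throughout the outer layer.

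Next I would determine the first interface and then iterate. Let $W$ be the connected component abutting $\Sigma$ of the interior of $D^{(1)}_1\cap D^{(2)}_1$, the region where both conductivities equal $\sigma_1$; there both $u_1$ and $u_2$ solve the same equation $\mathrm{div}(\sigma_1\nabla u)=0$, and a Runge-type density argument together with unique continuation would give $u_1=u_2$ on $W$ whenever the Neumann data are matched. I then argue by contradiction that $\Omega^{(1)}_1=\Omega^{(2)}_1$. If not, there is a point $P\in\partial\Omega^{(1)}_1$ lying in the open outer layer $D^{(2)}_1$; in a ball $B$ around $P$ one has $\sigma^{(2)}=\sigma_1$, so $u_2$ is real-analytic in $B$, while $u_1$ undergoes a genuine transmission across $\partial\Omega^{(1)}_1\cap B$ with inner value $\sigma^{(1)}_2\neq\sigma_1$. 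Since $u_1=u_2$ on the outer part $B\cap D^{(1)}_1\subset W$, the Cauchy data of $u_1$ on $\partial\Omega^{(1)}_1$ approached from outside coincide with those of the analytic $u_2$; the trace- and flux-continuity transmission conditions transfer these to the inner side, forcing the inner solution to carry Cauchy data compatible with $\sigma_1$ rather than with $\sigma^{(1)}_2$. Exploiting the freedom in the boundary excitation, to realize gradients spanning all directions at $P$, together with the non-flatness of $\partial\Omega_1$, this would force $\sigma^{(1)}_2=\sigma_1$, contradicting the visibility condition \eqref{different matrices}. Hence $\Omega^{(1)}_1=\Omega^{(2)}_1=:\Omega_1$.

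Having matched the first interface and conductivity, the coinciding Cauchy data on a non-flat $C^{1,\alpha}$ portion of $\partial\Omega_1$ (assumption 5) furnish equality of local N--D maps for the restricted problems on $\Omega_1$, so the whole argument restarts one layer deeper; connectedness of each $D_k$ lets unique continuation fill entire layers. The induction terminates simultaneously for both conductivities: were $K_1<K_2$, then after matching $K_1$ interfaces the region would be homogeneous for $\sigma^{(1)}$ while $\sigma^{(2)}$ still carried a genuine jump $\sigma^{(2)}_{K_1+1}\neq\sigma^{(2)}_{K_1+2}$, which the contradiction of the previous paragraph again excludes. This gives \eqref{equality number of domains} and, layer by layer, \eqref{uniqueness domains and conductivities}. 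The main obstacle is precisely the interface step, where the anisotropic change-of-variables ambiguity must be ruled out: the delicate point is to convert ``equal Cauchy data approaching a putative interface from outside'' into ``no admissible conductivity jump,'' which demands a careful unique continuation argument up to the $C^{1,\alpha}$ boundary and a quantitative use of non-flatness ensuring that any genuine jump registers in the local data. Propagating the local data cleanly from $\partial\Omega$ onto the successive interfaces $\partial\Omega_k$, controlling the support of the Neumann data and the Runge approximation on the shrinking subdomains, is the other technically demanding ingredient.
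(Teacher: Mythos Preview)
Your proposal is correct and follows essentially the same layer-stripping induction as the paper: determine the outermost conductivity from singular-solution asymptotics and non-flatness, propagate the data inward through the region where the two conductivities agree, and reach a contradiction with the visibility condition \eqref{different matrices} at any putative mismatched interface, then iterate. The paper packages the boundary-determination and propagation steps by invoking the Neumann-kernel asymptotics and the N--D--map transfer lemma of \cite{Al-dH-G} (their Lemmas~3.5, 3.6, 3.8 and Claim~4.1), while you phrase the same mechanism through the Alessandrini identity, Runge approximation and transmission conditions; these are equivalent formulations of one argument.
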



\section{Proof of the main result}

\begin{proof}[Proof of Theorem \ref{teorema principale}]
We assume without loss of generality that $K_1=\mbox{min}\left\{K_1,\:K_2\right\}$. First, we prove \eqref{uniqueness domains and conductivities} for $k=0, \dots, K_1$. We proceed by induction on $k$, $0\leq k\leq K_1$. For the case $k=0$, $\Omega^{(1)}_0 = \Omega = \Omega^{(2)}_0$ trivially holds true. By rephrasing the arguments used in \cite[Theorem 2.1]{Al-dH-G} we obtain that the equality of the maps

\[\mathcal{N}^{\Sigma}_{\sigma^{(1)}}=\mathcal{N}^{\Sigma}_{\sigma^{(2)}}\]

implies that, denoting by $E_1$ the connected component of  $\Omega\setminus\overline{\left(\Omega^{(1)}_1\cup\Omega^{(2)}_1\right)}$ such that $\Sigma\subset\partial E_1$, we have

\begin{equation}\label{step 0}
\sigma^{(1)}_1=\sigma^{(2)}_1\qquad\textnormal{in}\:E_1.
\end{equation}
In fact  \eqref{step 0} is obtained as follows.\\

- The knowledge of {$\mathcal{N}^{\Sigma}_{\sigma}$} enables {us} to determine the tangential asymptotics near the singularity of the Neumann kernel $N_{\sigma}(\cdot,y)$ for each $y\in \Sigma$ \cite[Lemma 3.8]{Al-dH-G}. {Here $N_{\sigma}(\cdot,y)$ is defined as the distributional solution of the following boundary value problem
\begin{displaymath}
\left\{ \begin{array}{ll}
\displaystyle\textnormal{div}(\sigma\nabla N_{\sigma}(\cdot,y))=0, & \textnormal{in}\quad\Omega\\
\sigma\nabla N_{\sigma}(\cdot, y)\cdot\nu= \delta(\cdot -y)-\frac{1}{\vert\partial\Omega\vert},
& \textnormal{on}\quad{\partial\Omega}.
\end{array} \right.
\end{displaymath}}
\\

- The tangential asymptotics of $N_{\sigma}(\cdot, y)$ allows us to identify the tangential $(n-1)\times(n-1)$ submatrices $g_{n-1}(y)$ of the metric 

\begin{equation*}\label{g}
g=\left(\det\sigma\right)^{\frac{1}{n-2}}\sigma^{-1}
\end{equation*} 

associated to the elliptic operator $\mbox{div}(\sigma\nabla\cdot)$ \cite[Lemma 3.5]{Al-dH-G}.\\

- The non-flatness assumptions of $\Sigma$ permits us to find enough independent tangent planes so to determine all of $g$ (hence $\sigma$) provided $\sigma$ is locally constant \cite[Lemma 3.6]{Al-dH-G}.\\

{Next we prove the induction step.} Let $1\leq k\leq K_1$. We assume that for every $j=0,\dots , k-1$

\begin{equation}\label{inductive step}
\Omega^{(1)}_j = \Omega^{(2)}_j\quad\textnormal{and}\quad \sigma^{(1)}_{j+1}=\sigma^{(2)}_{j+1}
\end{equation}

and suppose by contradiction that

\begin{equation}\label{assumption by contradiction}
\partial\Omega^{(1)}_{k}\setminus \overline{\Omega^{(2)}_{k}}\neq\emptyset
\end{equation}

(the symmetric case $\partial\Omega^{(2)}_{k}\setminus\overline{\Omega^{(1)}_{k}}\neq\emptyset$ being equivalent). We denote by $E_k$ the connected component of $\Omega\setminus\overline{\Omega^{(1)}_k\cup\Omega^{(2)}_k}$ such that $\Sigma\subset\partial{E}_k$. 
Let us fix an open portion of ${(\partial\Omega^{(1)}_{k}\setminus \overline{\Omega^{(2)}_{k}})} \cap \partial{E}_k$, which we denote by $\Sigma_k$
and let us select a subdomain $\mathcal{E}_k\subset E_k$ such that $\mathcal{E}_k$ and $\mathcal{F}_k=\Omega\setminus\overline{\mathcal{E}_k}$ have both Lipschitz boundary and such that

\begin{equation*}
\Sigma\cup\Sigma_k\subset\partial\mathcal{E}_k.
\end{equation*}

We note that  $\sigma^{(1)}=\sigma^{(2)}$ in $\mathcal{E}_k$. Let us denote by 

\[\mathcal{N}^{\Sigma_{k}}_{\sigma^{(i)}}\]

the local N-D maps for $\sigma^{(i)}$ in $\mathcal{F}_k$. Then \cite[Claim 4.1]{Al-dH-G} implies that

\begin{equation}\label{equality N 1}
\mathcal{N}^{\Sigma_{k}}_{\sigma^{(1)}} = \mathcal{N}^{\Sigma_{k}}_{\sigma^{(2)}}.
\end{equation}
{Note that the set $D$ appearing in \cite[Claim 4.1]{Al-dH-G} needs to be replaced by $\mathcal{E}_k$.}

Let us fix  $y_{k}\in\Sigma_{k}$ and a neighborhood $U_{k}$ of $y_{k}$ in $\mathcal{F}_k$ such that $U_{k}\cap\overline{\Omega^{(2)}_{k}}=\emptyset$. We can choose $U_k$ small enough so that $\sigma^{(1)}$ and $\sigma^{(2)}$ are both constant in $U_{k}$. Using once more \cite[Lemma 3.6]{Al-dH-G} we obtain

\begin{equation}\label{equality sigma 1 semifinal}
\sigma^{(1)} _{k+1}= \sigma^{(2)}_{k},
\end{equation}

which, combined with \eqref{inductive step}, implies that

\begin{equation}\label{equality sigma 1 final}
\sigma^{(1)} _{k+1}= \sigma^{(1)}_{k}.
\end{equation}

Hence by \eqref{different matrices} we have reached a contradiction with the assumption \eqref{assumption by contradiction}, and therefore

{\begin{equation}\label{proof main result part 1}
\Omega^{(1)}_{k}=\Omega^{(2)}_{k}.
\end{equation}
Once we know that such domains coincide, \eqref{equality N 1} in combination with \cite[Lemma 3.6]{Al-dH-G} implies

\begin{equation}\label{equal sigma}
\sigma^{(1)}_{k+1}=\sigma^{(2)}_{k+1} ;
\end{equation}
thus, the induction step is proven and \eqref{uniqueness domains and conductivities} holds true for $k=0,\dots , K_1$}. In particular, this implies that

\begin{equation}\label{ug}
\Omega^{(1)}_{K_1}=\Omega^{(2)}_{K_1}:=\Omega_{K_1}\quad\mbox{and}\quad\sigma^{(1)}_{K_1 +1}=\sigma^{(2)}_{K_1 +1}\quad\mbox{on}\quad D^{(2)}_{K_1+1}.
\end{equation}

Next, we show that $K_1 = K_2 :=K$. Suppose, on the contrary, $K_2>K_1$ and denote

\begin{equation}\label{inclusions tilde}
\widetilde{\Omega}^{(1)}_k = \Omega^{(2)}_k,\qquad\mbox{for}\:k=K_1 +1,\dots , K_2,
\end{equation}

\begin{equation}\label{inclusions tilde 2}
\widetilde{D}^{(1)}_k = D^{(2)}_k,\qquad\mbox{for}\:k=K_1 +1,\dots , K_2
\end{equation}

and



\begin{equation}\label{conductivity 1 on inclusions tilde}
\widetilde{\sigma}^{(1)}_k=\sigma^{(1)}_{K_1 +1},\qquad\mbox{for}\quad k=K_{1}+1,\dots , K_2.
\end{equation}



Let $\Sigma_{K_1+1}$ be a non-empty portion of $\partial\widetilde{\Omega}^{(1)}_{K_1+1}=\partial\Omega^{(2)}_{K_1+1}$. By the same argument adopted above, we have that

\[\sigma^{(1)}=\sigma^{(2)}\qquad\mbox{on}\quad\Omega\setminus\overline{\widetilde{\Omega}^{(1)}_{K_1+1}}=\Omega\setminus\overline{\Omega^{(2)}_{K_1+1}},\]

which combined with \cite[Claim 4.1]{Al-dH-G} leads to

\begin{equation*}
\mathcal{N}^{\Sigma_{K_1 +1}}_{\sigma^{(1)}} = \mathcal{N}^{\Sigma_{K_1 +1}}_{\sigma^{(2)}},
\end{equation*}

whence

\begin{equation}\label{ug2}
\sigma^{(1)}_{K_1 +2}=\sigma^{(2)}_{K_1 +2}.
\end{equation}

Moreover, by \eqref{conductivity 1 on inclusions tilde} we have that

\[\sigma^{(1)}_{K_1 +1}=\sigma^{(1)}_{K_1 +2},\] \

which combined with \eqref{ug} and \eqref{ug2}, implies that  \[\sigma^{(2)}_{K_1 +1}=\sigma^{(2)}_{K_1 +2}.\]

The latter contradicts \eqref{different matrices}, therefore $K_1=K_2$ which concludes the proof.

\end{proof}


We conclude with presenting a variation of the result obtained in
Theorem \ref{teorema principale} which, we believe, is of interest in
the context of imaging materials with a structure that is layered
locally only. Let $\Sigma$ and $\varphi$ be the portion (where the
measurements are collected) and the function respectively introduced
in Definitions \ref{def C1 alpha boundary}, \ref{def C1 alpha non
  flat}.





We denote

\begin{equation}\label{C}
C=\left\{x\in\mathbb{R}^n\: |\: |x'|\leq R,\:\varphi\leq x_n\leq M\right\},
\end{equation}

for some {{positive numbers $R$ and $M$}}. Suppose $C\subset\Omega$ and also that

\begin{equation}\label{C boundary}
\partial C\cap\partial\Omega=\left\{x\in\mathbb{R}^n\: |\: |x'|\leq R,\quad x_n =\varphi(x') \right\}\supset\Sigma.
\end{equation}

Let $\varphi_1,\dots , \varphi_K:B'_R \longrightarrow\mathbb{R}$ be $C^{1,\alpha}$ functions, non-flat at every point as in Definitions \ref{def C1 alpha boundary}, \ref{def C1 alpha non flat}  which satisfy

\begin{equation}\label{phi layers}
\varphi(x')\equiv\varphi_0 (x')<\varphi_1 (x')<\dots <\varphi_K (x')<M,\qquad\textnormal{for\:all}\quad x'\in\overline{B_R'}.
\end{equation}

For ${{k}}=1,\dots , K$ denote

\begin{equation}\label{D layers}
{{D_k}}=\left\{x\in C\: | \:{{\varphi_{k-1}}}(x')< x_n <{{\varphi_k}}(x')\right\}
\end{equation}

and assume that $\sigma\in L^{\infty}(\Omega,\: Sym_n)$ satisfies { the uniform ellipticity condition} \eqref{ellitticita'sigma} and

\begin{equation}\label{a priori info su sigma 2}
\sigma(x)={{\sum_{k=1}^{K}}}{{\sigma_{k}\chi_{D_k}}}(x),\qquad
x\in C,
\end{equation}

where each {{$\sigma_{k}$}} is a positive definite constant {matrix} and 

\begin{equation}\label{sigmas}
{{\sigma_k\neq\sigma_{k+1},\quad\textnormal{for\: all}\quad k=1,\dots , K-1}}.
\end{equation}

With the above setting, we have the following uniqueness result
confined to a subdomain $C$ of $\Omega$

\begin{theorem}
$\mathcal{N}^{\Sigma}_{\sigma}$ uniquely determines $\sigma$ within $C$.
\end{theorem}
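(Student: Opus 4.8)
The plan is to reduce the statement to the same layer-stripping scheme already carried out in the proof of Theorem~\ref{teorema principale}, keeping every geometric construction inside the cylinder $C$. I would compare two admissible conductivities $\sigma^{(1)},\sigma^{(2)}$, each of the layered form \eqref{a priori info su sigma 2} in $C$ with its own interfaces $\varphi^{(i)}_1,\dots,\varphi^{(i)}_{K_i}$ and constant matrices $\sigma^{(i)}_k$ subject to \eqref{sigmas}, and arbitrary, merely elliptic behaviour outside $C$ and above the topmost interface. Assuming $\mathcal{N}^{\Sigma}_{\sigma^{(1)}}=\mathcal{N}^{\Sigma}_{\sigma^{(2)}}$, the goal is $\sigma^{(1)}=\sigma^{(2)}$ on the layered part of $C$. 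The observation that makes this feasible is that each step in the proof of Theorem~\ref{teorema principale} is local near the measurement surface and propagates inward; hence the loss of a global nested structure and the presence of unknown material surrounding $C$ are harmless, provided the auxiliary subdomains are chosen inside $C$.

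I would set up the induction as before, reinterpreting the nested domains $\Omega_k$ as the supergraph regions $\Omega^{(i)}_k:=\{x\in C\,:\,x_n>\varphi^{(i)}_k(x')\}$, so that $D^{(i)}_k=\Omega^{(i)}_{k-1}\setminus\overline{\Omega^{(i)}_k}=\{\varphi^{(i)}_{k-1}<x_n<\varphi^{(i)}_k\}$ and $\Sigma\subset\{x_n=\varphi_0\}$ plays the role of the outermost boundary. For the base case, both conductivities equal the constants $\sigma^{(i)}_1$ near $\Sigma$; the tangential asymptotics of the Neumann kernel recovered from $\mathcal{N}^{\Sigma}_{\sigma}$ (\cite[Lemma 3.8]{Al-dH-G}), the identification of the tangential metric (\cite[Lemma 3.5]{Al-dH-G}), and the non-flatness of $\varphi_0$ through \cite[Lemma 3.6]{Al-dH-G}, together yield $\sigma^{(1)}_1=\sigma^{(2)}_1$ in the bottom layer adjacent to $\Sigma$.

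For the inductive step, assuming the lower interfaces and the matrices $\sigma^{(1)}_1,\dots,\sigma^{(1)}_k$ already coincide, I would argue by contradiction that $\varphi^{(1)}_k\not\equiv\varphi^{(2)}_k$. Since both are graphs over $B'_R$, any discrepancy occurs on an open subset of the interior $\{|x'|<R\}$, so I can fix an exposed interior portion $\Sigma_k$ of $\{x_n=\varphi^{(1)}_k\}$ lying strictly below $\{x_n=\varphi^{(2)}_k\}$, a subdomain $\mathcal{E}_k\subset C$ reaching $\Sigma$ on which $\sigma^{(1)}=\sigma^{(2)}$ with $\Sigma\cup\Sigma_k\subset\partial\mathcal{E}_k$, and its complement $\mathcal{F}_k=\Omega\setminus\overline{\mathcal{E}_k}$. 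The domain-reduction argument \cite[Claim 4.1]{Al-dH-G} (with $D$ replaced by $\mathcal{E}_k$) transfers the identity of local N-D maps from $\Sigma$ to $\Sigma_k$; applying \cite[Lemma 3.6]{Al-dH-G} in a small neighbourhood $U_k$ of a point $y_k\in\Sigma_k$ — where $\sigma^{(1)}\equiv\sigma^{(1)}_{k+1}$ and $\sigma^{(2)}\equiv\sigma^{(2)}_k$ are both constant and $U_k\cap\overline{\Omega^{(2)}_k}=\emptyset$ — and using the non-flatness of $\varphi^{(1)}_k$ forces $\sigma^{(1)}_{k+1}=\sigma^{(2)}_k=\sigma^{(1)}_k$, contradicting the visibility condition \eqref{sigmas}. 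Hence $\varphi^{(1)}_k\equiv\varphi^{(2)}_k$, and one further application of \cite[Lemma 3.6]{Al-dH-G} gives $\sigma^{(1)}_{k+1}=\sigma^{(2)}_{k+1}$. Iterating upward through the layers, for as long as \eqref{sigmas} permits, and ruling out the possibility $K_1\neq K_2$ by the same contradiction with \eqref{sigmas} as at the end of the proof of Theorem~\ref{teorema principale}, yields $\sigma^{(1)}=\sigma^{(2)}$ in $C$.

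The hard part, I expect, is not the algebra of the stripping but the verification that the localization machinery of \cite{Al-dH-G} — the Neumann-kernel asymptotics and, above all, the domain-reduction Claim 4.1 — remains valid in this truncated geometry, where $\mathcal{F}_k$ carries entirely unknown conductivity outside $C$ and above the top interface. One must check that $\mathcal{E}_k$ and $\mathcal{F}_k$ can always be taken with Lipschitz boundary and $\Sigma\cup\Sigma_k\subset\partial\mathcal{E}_k$, even though the interfaces are graphs terminating on the lateral wall $\{|x'|=R\}$ of $C$; confining the interface discrepancy to the interior of $B'_R$ and shrinking $U_k$ so that it meets neither the wall nor $\overline{\Omega^{(2)}_k}$ is precisely what lets the purely local \cite[Lemma 3.6]{Al-dH-G} argument proceed in spite of the unknown surroundings.
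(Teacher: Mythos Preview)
Your proposal is correct and follows essentially the same layer-stripping approach as the paper: reduce to the argument of Theorem~\ref{teorema principale}, exploiting that all interfaces are graphs over the common base $B'_R$ so that the successive portions $\Sigma_k$ and the agreement regions $\mathcal{E}_k$ can be chosen inside $C$. The paper is terser and supplies the explicit choice $\mathcal{E}_k=\{\,|x'|<R,\ \varphi\le x_n\le\min\{\varphi^{(1)}_k,\varphi^{(2)}_k\}\,\}$, which already handles the Lipschitz-regularity and ``strictly below'' issues you flag in your last paragraph; otherwise the arguments coincide.
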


\begin{proof}
The proof follows the same line of the proof of Theorem \ref{teorema principale}.  
Let $\sigma^{(1)},\sigma^{(2)}$ satisfy the above structure conditions, {that is,

\begin{equation}\label{conduttivita anisotrope layers}
\sigma^{(i)}(x)=\sum_{k=1}^{K_i}\sigma_{k}^{(i)}\chi_{D^{(i)}_k}(x),\qquad
x\in\Omega,\:i=1,2,
\end{equation}

where $\sigma_{k}^{(i)}\in Sym_n$ are constant matrices and he layers $D_k^{(i)}$ are described by the functions $\varphi_j^{(i)}, i=1,2$.} The fact that, within $C$, the various interfaces are graphs with respect to the same reference system, enables us to select the inner boundary portions $\Sigma_k$ in such a way that they are all contained in $C$. The sets $\mathcal E_k$ can be explicitly expressed as 
\begin{eqnarray*}
\{ x\in \mathbb{R}^n \ : \  |x'|<R \ ,  \ \varphi\le x_n\le \min \{{\varphi}_k^{(1)}, {\varphi}_k^{(2)} \}  \} \ .
\end{eqnarray*}

 Hence $\mathcal{N}^{\Sigma}_{\sigma^{(1)}}=\mathcal{N}^{\Sigma}_{\sigma^{(2)}}$ leads to $\sigma^{(1)}=\sigma^{(2)}$ within the set $C$.
 
\end{proof}

\section*{\normalsize{Acknowledgments}}
The research carried out by G. Alessandrini and E. Sincich for the preparation of this paper has been supported by FRA 2016 "Problemi inversi, dalla stabilit\`{a} alla ricostruzione" funded by Universit\`{a} degli Studi di Trieste. M.V de Hoop was partially supported by the Simons Foundation under the MATH $+$ X program, the National Science Foundation under grant DMS-1559587, and by the members of the Geo-Mathematical Group at Rice University. R. Gaburro
acknowledges the support of MACSI, the Mathematics Applications Consortium for Science and Industry (www.macsi.ul.ie), funded by the Science Foundation Ireland Investigator Award 12/IA/1683. E. Sincich has also been supported by Gruppo Nazionale per l' Analisi Matematica, la
Probabilit\`a e le loro Applicazioni (GNAMPA) by the grant ''Analisi di problemi inversi:  stabilit\`a e
ricostruzione'' .



\begin{thebibliography}{999}


\bibitem[A-F]{A-F}  R. A. Adams and J. J. F. Fournier. Sobolev spaces. \textbf{140}. Academic press, 2003.


\bibitem[Al]{Al} G. Alessandrini, Singular solutions of elliptic equations and the determination of conductivity by boundary measurements, J. Differential Equations, \textbf{84} (\textbf{2}) (1990), 252-272.







\bibitem[Al-dH-G]{Al-dH-G} G. Alessandrini, M.V. de Hoop and R. Gaburro,
Uniqueness for the electrostatic inverse boundary value problem with piecewise constant anisotropic conductivities, {Inverse Problems \textbf{33} (2017) 125013, doi:10.1088/1361-6420/aa982d}.


\bibitem[Al-G]{Al-G} G. Alessandrini, R. Gaburro, Determining conductivity with special anisotropy by boundary measurements, SIAM J. Math. Anal. \textbf{33}  (2001), 153-171.

\bibitem[Al-G1]{Al-G1} G. Alessandrini, R. Gaburro, The local Calder\'{o}n problem and the determination at the boundary of the conductivity, Comm. Partial Differential Equations. \textbf{34}  (2009), 918-936.






\bibitem[As-La-P]{As-La-P}  K. Astala, M. Lassas    and L. P\" {a}iv\"
{a}rinta, Calder\'on inverse problem for anisotropic conductivity in the
plane, Comm. Partial Differential Equations \textbf{30} (2005), 207-224.



\bibitem[Be]{Be} M. I. Belishev, The Calder\'{o}n problem for two-dimensional manifolds by the BC-Method, SIAM J. Math. Anal. \textbf{35} (\textbf{1}) (2003), 172--182.

\bibitem[B-M]{BM} E. Bierstone, P. Milman, Semianalytic and subanalytic sets, Inst. Hautes
´ Etudes Sci. Publ. Math., 67, (1988),  5--42.








\bibitem[C]{C} A. P. Calder\'{o}n, On an inverse boundary value problem, Seminar on Numerical Analysis and its Applications to Continuum Physics (Rio de Janeiro, 1980),   65--73, Soc. Brasil. Mat., Rio de Janeiro, 1980. Reprinted in: Comput. Appl. Math. \textbf{25}  (\textbf{2-3}) (2006), 133--138.

\bibitem[C-H-N]{CHN} C. I. C\^arstea, N. Honda and G. Nakamura, Uniqueness in the inverse boundary value problem for piecewise homogeneous
anisotropic elasticity,  	arXiv:1611.03930 (2017).








\bibitem[E]{E} Ellis, R.G. and D.W. Oldenburg, The pole-pole 3-D
DC-resistivity inverse problem: a conjugate gradient approach,
Geophysical Journal International \textbf{119} (\textbf{1}) (1994), 187-194. doi:10.1111/j.1365-246X.1994.tb00921.x.



\bibitem[F]{F} C. G. Farquharson,  Constructing
piecewise-constant models in multidimensional minimum-structure inversions, Geophysics \textbf{73} (\textbf{1}) (2007), K1-K9,10.1190/1.2816650.

\bibitem[G-Li]{G-Li} R. Gaburro and W. R. B. Lionheart, Recovering Riemannian metrics in monotone families from boundary data, Inverse Problems \textbf{25} (\textbf{4}) (2009).

\bibitem[G-S]{G-S} R. Gaburro and E. Sincich, Lipschitz stability for the inverse conductivity problem for a conformal class of anisotropic conductivities, Inverse Problems \textbf{31} 015008 (2015).

\bibitem[Ga]{Ga} L.A. Gallardo and M.A. Meju, Joint two-dimensional DC resistivity and seismic travel time inversion with
cross-gradients constraints, Journal of Geophysical Research \textbf{109}, B03311 (2004), doi: 10.1029/2003JB002716.



\bibitem[Gu]{Gu} T. G\"{u}nther, C. R\"{u}cker, and K. Spitzer, Three-dimensional modelling and inversion of dc resistivity
data incorporating topography II. Inversion, Geophysical Journal International \textbf{166} (\textbf{2}) (2006), 506-517. doi:10.1111/j.1365-246X.2006.03011.x.

\bibitem[H-O]{H-O} E. Haber and D. Oldenburg, Joint inversion: a structural approach, Inverse
Problems \textbf{13} (\textbf{1}) (1997), 63-77.






\bibitem[I]{I} M. Ikehata, Identification of the curve of discontinuity of the determinant of the anisotropic conductivity, J. Inverse Ill-Posed Probl. \textbf{8} (\textbf{3}) (2000), 273--285.












\bibitem[Ko-V1]{Ko-V1} R. Kohn and M. Vogelius, Identification of an unknown conductivity by means of measurements at the boundary, SIAM-AMS Proc. \textbf{14} (1984), 113-123.



\bibitem[La-U]{La-U} M. Lassas and G. Uhlmann, On determining a Riemannian manifold from the Dirichlet-to-Neumann map, Ann. Sci. \'Ecole Norm. Sup. \textbf{34} (2001), 771-787.

\bibitem[La-U-T]{La-U-T} M. Lassas, G. Uhlmann and M. Taylor, The Dirichlet-to-Neumann map for complete Riemannian manifolds with boundary,  Comm. Anal. Geom.  \textbf{11} (\textbf{2}) (2003), 207-221.


\bibitem[Le-U]{Le-U} J. M. Lee and G. Uhlmann, Determining anisotropic real-analytic conductivities by boundary measurements, Comm. Pure Appl. Math. \textbf{42} (1989), 1097-1112.

\bibitem[Li]{Li} W. R. B. Lionheart, Conformal uniqueness results in anisotropic electrical impedance imaging, Inverse Problems \textbf{13} (1997), 125-134.





\bibitem[Lo]{Lo} M. H. Loke, I. Acworth, and T.Dahlin, A comparison
of smooth and blocky inversion methods in 2D electrical imaging
surveys: Exploration Geophysics \textbf{34} (2003), 182-187, 10.1071/EG03182.


\bibitem[M]{M} A. Malinverno and C. Torres-Verd\'{i}n, Bayesian
inversion of DC electrical measurements with uncertainties for reservoir monitoring, Inverse Problems \textbf{16} (2000), 1343-1356.



\bibitem[N]{N} A. Nachman, Global uniqueness for a two dimensional inverse boundary value problem, Ann. Math. \textbf{142} (1995), 71-96.






\bibitem[P]{P} A. Par\'{e} and Y. Li, Improved imaging of sharp boundaries
in DC resistivity, SEG International Exposition and Annual
Meeting (2017), 24-29 (SEG-2017-17739005).




\bibitem[Sy]{Sy} J. Sylvester, An anisotropic inverse boundary value problem, Comm. Pure. Appl. Math. \textbf{43} (1990), 201-232.




\bibitem[U]{U} G. Uhlmann, Electrical impedance tomography and Calder\'{o}n's problem (topical review), Inverse Problems \textbf{25} (\textbf{12}) (2009), 123011 doi:10.1088/0266-5611/25/12/123011.




Katayama,
I.,
2011.
Electrical

anisotropy
of
deformed
talc
 rocks
 and
 serpentinites
at
the
Earth
and
Planetary
Interiors,
in
press.



\bibitem[Z]{Z} Zhang, J., R.L. Mackie and T.R. Madden, 3-D
resistivity forward modeling and inversion using conjugate gradients, Geophysics \textbf{60} (\textbf{5}) (1995), 1313-1325. doi: 10. 1190/1.1443868.

\end{thebibliography}
\end{document}